\numberwithin{equation}{section}
\DeclareMathOperator{\nullsp}{ker}
\DeclareMathOperator{\sgn}{sgn}
\renewcommand{\L}{\cal L}
\newcommand{\no}{\noindent}
\newcommand{\be}{\begin{equation}} 
\newcommand{\bea}{\begin{eqnarray}}
\newcommand{\ee}{\end{equation}}
\newcommand{\beas}{\begin{eqnarray*}}
\newcommand{\eea}{\end{eqnarray}}
\newcommand{\eeas}{\end{eqnarray*}}
\newcommand{\non}{\nonumber}
\newcommand{\cal}{\mathcal}
\newcommand{\lie}{\mathrm{Lie}}
\def\S{{\bf S}}
\def\C{{\mathbb C}}
\def\z3{{\mathbb Z_3}}
\def\L{{\cal L}}
\newtheorem{theorem}{Theorem}[section]
\newtheorem{definition}[theorem]{Definition}
\newtheorem{corollary}[theorem]{Corollary}
\newtheorem{conjecture}[theorem]{Conjecture}
\newtheorem{lemma}[theorem]{Lemma}
\newtheorem{proposition}[theorem]{Proposition}
\newtheorem{question}[theorem]{Question}
\begin{document}
\title[CataLAnKe Theorem]{On a generalization of $\lie(k$): a CataLAnKe theorem}
\author[Friedmann]{Tamar Friedmann}
\address{Department of Mathematics and Statistics, Smith College,
and Department of Physics and Astronomy, University of Rochester}
%\email{tamarf1@yahoo.com}
\curraddr{Department of Mathematics and Statistics, Colby College}
\email{tfriedma@colby.edu}

\author[Hanlon]{Phil Hanlon}
\address{Department of Mathematics, Dartmouth College}
\email{philip.j.hanlon@dartmouth.edu}

\author[Stanley]{Richard P. Stanley$^1$}
\address{Department of Mathematics, MIT, and Department of Mathematics, University of Miami}
\email{rstan@math.mit.edu}
\thanks{$^1$Supported in part by NSF grant DMS 1068625}

\author[Wachs]{Michelle L. Wachs$^2$}
\address{Department of Mathematics, University of Miami}
\email{wachs@math.miami.edu}
\thanks{$^{2}$Supported in part by NSF grants
DMS 1202755, DMS 1502606, and by   Simons Foundation grant
\#267236.}

\begin{abstract}We initiate a study of the representation of the symmetric group on the multilinear component of an $n$-ary generalization  of the free Lie algebra, which we call a free LAnKe.  Our central result is that  the representation of the symmetric group $S_{2n-1}$ on the multilinear
  component of the free LAnKe with $2n-1$ generators is given by an irreducible representation
whose dimension is the $n$th Catalan number. This leads to a more general result on eigenspaces of 
  a certain linear operator, which has additional consequences.    We also obtain a new presentation of  Specht modules of staircase shape as a consequence of our central result.  
 \end{abstract} 

%\date{\today}

\maketitle
\section{Introduction} \label{introsec}
%\renewcommand{\baselinestretch}{1.3}
%\normalsize

Lie algebras are defined as vector spaces equipped with an
antisymmetric commutator and a Jacobi identity. They are a cornerstone
of mathematics and have applications in a wide variety of areas of
mathematics as well as physics. Also of fundamental importance is the
free Lie algebra, a natural mathematical construction central in the
field of algebraic combinatorics. The free Lie algebra has beautiful
dimension formulas; an elegant basis in terms of binary trees;
and connections to the shuffle algebra, Lyndon words, necklaces, Witt
vectors, the descent algebra of the symmetric goup, quasisymmetric functions,
noncommutative symmetric functions, and the lattice of set
partitions.  
See \cite{Re} for further information.

In this paper we consider a generalization of the free Lie algebra to $n$-fold commutators, and the representation of the symmetric group on its multilinear component. This representation is a direct generalization of the well-known representation   of the symmetric group  on the multilinear component of the free Lie algebra.  

Let $X:=\{x_1, x_2, \ldots , x_m\}$ be a set of generators. Then the multilinear component  of the free Lie algebra on $X$ is the subspace spanned by bracketed ``words" where each generator in $X$ appears exactly once.  For example,
 $[[x_1, x_3],[[x_4, x_5],x_2]]$ is such a bracketed permutation when $m=5$,
 while $[[x_1, x_3],[[x_1, x_5],x_3]]$ is not.   The symmetric group $S_X$ acts naturally on the bracketed words. Indeed,  $\sigma\in S_X$ acts  by replacing each ``letter"  $x$ of the bracketed word by $\sigma(x)$.  For example,
 $$\sigma [[x_1, x_3],[[x_4, x_5],x_2]] = [[\sigma(x_{1}), \sigma(x_{3})],[[\sigma(x_{4}), \sigma(x_{5})],\sigma(x_2)]].$$
 This induces a representation $\lie(m)$ of the symmetric group $S_m$ on the multilinear component of the free Lie algebra on $m$ generators.  It is well known that the dimension of $\lie(m)$ is $(m-1)!$

The representation $\lie(m)$  has several equivalent descriptions; we
mention two of them here.
\begin{theorem} \label{liek} Let $m \ge 2$.

\begin{enumerate} \item[(a)] {\normalfont(Klyachko \cite{K})} 
The representation $\lie(m)$  is equivalent to the representation
 induced to $S_m$ by any faithful one-dimensional representation of a cyclic subgroup
 of order $m$ generated by an $m$-cycle.
\item[(b)]  {\normalfont (Kraskiewicz and Weyman \cite{KW})}  
Let $i$ and $m$ be relatively prime, and let $\lambda$ be a partition of $m$. The multiplicity of the irreducible representation  indexed by $\lambda$
 in $\lie(m)$ is equal to the number of standard Young tableaux of shape $\lambda$ and of major index congruent to $i \mod m$. 
\end{enumerate}
\end{theorem}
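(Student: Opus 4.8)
The engine behind all three parts is a single symmetric-function identity: the Frobenius characteristic
\[ \mathrm{ch}\bigl(\lie(k)\bigr)=\frac1k\sum_{d\mid k}\mu(d)\,p_d^{\,k/d}, \]
where $p_d$ is the power sum and $\mu$ the M\"obius function. I would first record this from the Poincar\'e--Birkhoff--Witt theorem together with Witt's necklace formula for the multilinear dimensions of the free Lie algebra (see \cite{Re}); equivalently, the character of $\lie(k)$ is supported on permutations whose cycle type has the form $d^{k/d}$ with $d\mid k$. Each of (a), (b), (c) is then deduced from this identity.

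For (a), let $C_k=\langle c\rangle$ with $c$ a $k$-cycle, and let $\psi$ be a faithful linear character, $\psi(c)=\zeta$ a primitive $k$th root of unity. The characteristic of an induced representation, specialized to $C_k$, gives
\[ \mathrm{ch}\bigl(\ind_{C_k}^{S_k}\psi\bigr)=\frac1k\sum_{j=0}^{k-1}\zeta^{\,j}\,p_{\,k/\gcd(j,k)}^{\;\gcd(j,k)}, \]
since $c^{\,j}$ is a product of $\gcd(j,k)$ cycles of length $k/\gcd(j,k)$. Collecting the terms with a fixed value of $d=\gcd(j,k)$ turns the coefficient of $p_{k/d}^{\,d}$ into the Ramanujan sum $\sum_{\gcd(\ell,\,k/d)=1}\zeta^{\,d\ell}=\mu(k/d)$, and the substitution $e=k/d$ rewrites the whole expression as $\frac1k\sum_{e\mid k}\mu(e)\,p_e^{\,k/e}=\mathrm{ch}(\lie(k))$. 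Faithfulness (primitivity of $\zeta$) is precisely what forces the Ramanujan sums to collapse to M\"obius values, and the same computation goes through for $\psi(c)=\zeta^{\,i}$ with $\gcd(i,k)=1$.

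For (b), I would begin from (a): $\lie(k)\cong\ind_{C_k}^{S_k}\psi_i$ for a faithful $\psi_i$ with $\psi_i(c)=\zeta^{\,i}$ and $\gcd(i,k)=1$. Frobenius reciprocity then expresses the multiplicity of $S^\lambda$ in $\lie(k)$ as
\[ \bigl\langle \chi^\lambda|_{C_k},\,\psi_i\bigr\rangle_{C_k}=\frac1k\sum_{j=0}^{k-1}\chi^\lambda(c^{\,j})\,\zeta^{-ij}. \]
The Murnaghan--Nakayama rule evaluates $\chi^\lambda(c^{\,j})$ as a signed count of rim-hook tableaux of $\lambda$ all of whose hooks have length $k/\gcd(j,k)$, vanishing unless $\lambda$ has empty $(k/\gcd(j,k))$-core. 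The crux, and the step I expect to be the main obstacle, is to match this alternating sum against the number of standard Young tableaux of shape $\lambda$ with major index congruent to $i\bmod k$. I would route this through the fake-degree polynomial $f^\lambda(q)=\sum_{T\in\mathrm{SYT}(\lambda)}q^{\mathrm{maj}(T)}$: the roots-of-unity filter gives the desired congruence count as $\frac1k\sum_{j}\zeta^{-ij}f^\lambda(\zeta^{\,j})$, so it suffices to identify $f^\lambda(\zeta^{\,j})$ with $\chi^\lambda(c^{\,j})$ at every $k$th root of unity (up to the standard normalizing power of $\zeta$). This identification is the content of Springer's theory of regular elements -- the $k$-cycle being regular of order $k$ -- combined with the root-of-unity specialization of the major-index generating function, and reconciling the sign conventions coming from the Murnaghan--Nakayama rule and from $\mathrm{maj}$ is where the genuine work lies.

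For (c), I pass to characteristics and set $L_m:=\mathrm{ch}(\lie(m))$. Inducing from $S_k$ to $S_{k+1}$ is multiplication by $p_1$, so $\mathrm{ch}(W_{k+1})=p_1L_k-L_{k+1}$, while restriction from $S_{k+1}$ to $S_k$ is the skewing operator $p_1^\perp=\partial/\partial p_1$, a derivation with $p_1^\perp p_d=\delta_{d,1}$. Only the $d=1$ term of each $L_m=\frac1m\sum_{d\mid m}\mu(d)p_d^{\,m/d}$ survives differentiation, so $p_1^\perp L_k=p_1^{\,k-1}$ and $p_1^\perp L_{k+1}=p_1^{\,k}$, whence
\[ p_1^\perp\,\mathrm{ch}(W_{k+1})=\bigl(L_k+p_1\,p_1^{\,k-1}\bigr)-p_1^{\,k}=L_k=\mathrm{ch}\bigl(\lie(k)\bigr). \]
Since a complex representation is determined by its character, this identity of characteristics gives $W_{k+1}\!\downarrow_{S_k}\,\cong\lie(k)$, which is (c). This part is a short calculation rather than an obstacle; the only point needing care is that $W_{k+1}$ is a genuine module, which is immediate from complete reducibility over $\C$.
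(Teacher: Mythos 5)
A preliminary remark: the paper does not prove this theorem. All three parts are quoted as known results with the proofs delegated to the cited sources \cite{K}, \cite{KW}, \cite{Wh}, so there is no in-paper argument to compare yours against and your proposal must be judged on its own. On that basis, parts (a) and (c) are correct and complete, and are the standard arguments: starting from $\mathrm{ch}(\lie(k))=\frac1k\sum_{d\mid k}\mu(d)p_d^{k/d}$ (which indeed follows from PBW and the Witt formula independently of Klyachko's theorem, so there is no circularity), the Ramanujan-sum collapse in (a) and the $p_1^\perp$ computation in (c) both go through as you write them. The one point you wave at in (c) --- that $\lie(k+1)$ genuinely embeds in $\lie(k)\uparrow_{S_k}^{S_{k+1}}$ so that $W_{k+1}$ is an honest module --- requires the multiplicity inequality $\langle p_1L_k,s_\lambda\rangle\ge\langle L_{k+1},s_\lambda\rangle$ and not merely complete reducibility, but this is standard and the character identity you compute is the real content of (c).

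Part (b) is where the genuine gap lies. You correctly reduce the statement, via (a), Frobenius reciprocity, and the roots-of-unity filter, to the identity $\chi^\lambda(c^{\,j})=f^\lambda(\zeta^{\,j})$ for all $j$, where $f^\lambda(q)=\sum_{T\in\mathrm{SYT}(\lambda)}q^{\mathrm{maj}(T)}$. But you then state that establishing this identity, with its normalization and signs, ``is where the genuine work lies'' and stop. That identity is not a bookkeeping step: given (a), it is essentially equivalent to the Kraskiewicz--Weyman theorem itself, and proving it requires either Springer's theory of regular elements combined with the $q$-hook-length formula for $f^\lambda(q)$ evaluated at roots of unity, or a Murnaghan--Nakayama argument through $e$-cores and $e$-quotients. (For the record, the clean form is Stembridge's: the eigenvalues of a $k$-cycle acting on $S^\lambda$ are exactly $\zeta^{\mathrm{maj}(T)}$ for $T\in\mathrm{SYT}(\lambda)$, so no sign discrepancy survives in the end --- but that is a theorem to be proved or cited, not reconciled by inspection.) As written, your (b) is an accurate road map rather than a proof.
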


Interestingly, $\lie(m)$ appears in a variety of other contexts, such
as the top homology of the lattice of set partitions in work of
Stanley \cite{St1}, Hanlon \cite{Han}, Barcelo \cite{Ba}, and Wachs
\cite{Wa}, homology of configuration spaces of $m$-tuples of
distinct points in Euclidean space in work of Cohen \cite{Co}, and scattering amplitudes in gauge theories in work of Kol and Shir \cite{KS}.

The generalization of the free Lie algebra that we will consider is based on the following definition.   Throughout this paper,  all vector spaces are taken over the   field $\C$.

\begin{definition} \label{lankedef}  A   {\bf Lie algebra $\cal L$ of the $n$-th 
kind} (a ``LAnKe," or ``LATKe" for $n=3$)   is a vector space equipped with an n-linear  bracket
\[ [\cdot, \cdot , \hskip .7cm , \cdot] : \times ^n \L \rightarrow \L ~\]
that satisfies the following antisymmetry relation for all $\sigma$ in the symmetric group $S_n$:
\begin{equation} \label{antsym} [x_1,\dots,x_n] = \sgn(\sigma) [x_{\sigma(1)},\dots, x_{\sigma(n)}] \end{equation}
and the following generalization of the Jacobi identity:
\begin{align}
\label{type1} &[[x_1, x_2, \ldots,  x_n], x_{n+1},\ldots , x_{2n-1}]
\\ \non&=\sum_{i=1}^{n} [x_1, x_{2}, \ldots , x_{i-1},[ x_{i}, x_{n+1}, \ldots , x_{2n-1}], x_{i+1}, \ldots , x_{n}],
\end{align}
for $x_i \in \L$. 
\end{definition}

The above definition arose  from generalizing a relation between ADE singularities and ADE Lie algebras as a tool to solve a string-theoretic problem (see \cite{Fr}; also, see \cite{Fi, Ta, DT, Ka, Li, BL, Gu}).  This $n$-ary generalization  of Lie algebras is also referred to in the literature as a Filippov algebra.  A different generalization of Lie algebras that also involves $n$-ary brackets appeared in the 1990's in work of Hanlon and Wachs \cite{HW}.

Similar to a free Lie algebra, a LAnKe is free on $X= \{x_1,\dots, x_m\}$ if it is generated
by all possible $n$-bracketings of elements of $X$, and if the only
possible relations existing among these bracketings are consequences
of $n$-linearity of the bracketing, the antisymmetry of the bracketing
(\ref{antsym}), and the generalized Jacobi identity (\ref{type1}).  The multilinear component is spanned by $n$-bracketed permutations of $X$.  Every such $n$-bracketed permutation on $X$ has the same number of brackets, which depends on $m$, the number of generators.  Indeed, if the number of brackets is $k$ then   $m=(n-1)k +1$.

The object we study in this paper is 
the representation of the symmetric group $S_{(n-1)k +1}$ on the
multilinear component of the free LAnKe on $(n-1)k +1$ generators. 
We
denote this representation by $\rho_{n,k}$ and note that $\rho_{2,k}=\lie(k+1)$. See Section~\ref{prelimsec} for further details.

\begin{figure}[!t]

\begin{minipage}{\linewidth}
\small

%%\[
\newcolumntype{L}{>{$}l<{$}} % math-mode version of "l" column type
\newcolumntype{C}{>{$}c<{$}} % math-mode version of "c" column type

\begin{tabular}{|C||C|C|C|C|C||}
\hline 
&\multicolumn{5}{c|}{\mbox{{\bf Table 1: What we can prove about the representations $\rho_{n,k}$}}}\\ 
\hline 
%n$\backslash$ k
${\backslashbox{$n$ \kern-1em}{ \kern-1em $k$}}$
 &1&2&3&4&k \\ \hline 
&S_2&S_3&S_4&S_5&S_{k+1}  \\
&[..]&[.[..]]&[.[.[..]]]&[.[.[.[..]]]]
&\\
2&\yng(1,1)&\yng(2,1)& \yng(2,1,1) \oplus  \yng(3,1) &
\begin{array}{c} 32\oplus  41\\  \oplus  21^3  \oplus  31^2 \oplus  2^21 \end{array} &Lie(k+1)\\ 
&1&2&
6&24&k!\\ \hline
&S_3&S_5&S_7&S_9&S_{2k+1}\\
&[...]&[..[...]]&[..[..[...]]]&[..[..[..[...]]]]&\\
3&\yng(1,1,1)&\yng(2,2,1)&\yng(3,2,1,1)\oplus \yng(3,3,1)&\begin{array}{c}432\oplus  4^21 \\ \oplus  421^3 \oplus  431^2 \oplus 42^21 \\ \oplus  \gamma_{3,4} \footnote{where  $\gamma_{3,4} $ is an $\S_9$-module of dimension $204$; see section \ref{conjecturesec}.}\end{array}  & \rho_{3,k}\\
&1&5&
56&1077 &\\ \hline
&S_4&S_7&S_{10}&S_{13}&S_{3k+1}\\
&[....]&[...[....]]&[...[...[....]]]&[...[...[...[....]]]]&\\
4&\yng(1,1,1,1)&\yng(2,2,2,1)&\yng(3,3,2,1,1)\oplus  \yng(3,3,3,1)& 
\rho_{4,4} & \rho_{4,k}\\
&1&14&
660&& \\ \hline
&S_n&S_{2n-1}&S_{3n-2}&S_{4n-3}&S_{(n-1)k +1}\\ &&&&& \\
n&1^n&2^{n-1}1&3^{n-2}21^2\oplus  3^{n-1}1 &
\rho_{n,4}
& \rho_{n,k}\\ 
&1&{1\over n+ 1}{2n \choose n}&
{4\over \prod_{i=1}^3 (n+ i)} {3n \choose n,n,n}&&\\
&&&&&\\ 
\hline
\end{tabular}

\end{minipage}

\end{figure}

Table 1 summarizes what we can now prove  about the decomposition of $\rho_{n,k}$ into irreducibles. The Young diagrams in the table stand for Specht modules $S^\lambda$ of the indicated shape $\lambda$.   The number given below each decomposition is the dimension of the representation $\rho_{n,k}$, which can be obtained from the well known hook length formula when $k \le 3$. The dimension of $\rho_{3,4}$ was obtained using a C++ computer program.  The symmetric group $S_{(n-1)k +1}$ is given at the top of each cell.
The sign representations that appear in the $k=1$ column  trivially follow from the antisymmetry of the bracket.  The $k=2$ column follows from our central result, Theorem~\ref{catalanke} below, and the  $k=3$ column follows from results of the authors, which will appear in a forthcoming paper \cite{FHSW2}. (This result for $k=3$ was a conjecture in an earlier version of this paper.)  The decomposition for $\rho_{3,4}$ is also proved in \cite{FHSW2}.

\begin{theorem} \label{catalanke} For all $n\ge 2$, the
 $S_{2n-1}$-module $\rho _{n,2}$ is isomorphic to the Specht module $S^{2^{n-1}1}$,
  whose dimension is the $n^{th}$ Catalan number ${1\over
    n+1}{2n\choose n}$.  
\end{theorem}

An explicit $S_{2n-1}$-isomorphism from $\rho_{n,2}$ to $S^{2^{n-1}1}$ can in fact be obtained from  presentations of  free LAnKes and   Specht modules following from results in \cite{DI} and  \cite{Fu}, respectively; see Section~\ref{stair}.   In Section~\ref{catalanrep},  the relationship between $\rho_{n,2}$ and $S^{2^{n-1}1}$ is placed in a  more general setting.  The $S_{2n-1}$-module $\rho_{n,2}$ has a  presentation of the form $V_{n,2}/ R_{n,2}$, where $V_{n,2}$ is generated by $n$-bracketed permutations involving an   $n$-bracket  that is  antisymmetric only, and $R_{n,2}$ is the submodule of $V_{n,2}$ generated by the generalized Jacobi relations (\ref{type1}).  We consider a natural linear operator on $V_{n,2}$ whose kernel  is isomorphic to $\rho_{n,2}$.  We show that  all the  eigenspaces  are  irreducible of the form $S^{2^{i}1^{2n-1-2i}}$ and that the one corresponding to eigenvalue 0 is obtained by setting $i=n-1$.  

 Techniques from our proof also play a role in the proof  of the above mentioned decomposition for $\rho_{n,3}$ obtained in \cite{FHSW2}.  Our eigenspace approach is developed further in subsequent work of Brauner and Friedmann \cite{BF} and of the authors \cite{FHSW3} in which new presentations for Specht modules are obtained; see Section~\ref{stair}.

The first three columns of Table 1 suggest that  
  $\rho_{n,k}$  is isomorphic to the module $\beta_{n,k}$ whose decomposition is  obtained by adding a row of length $k$ to the top of each Young diagram in the decomposition of $\rho_{n-1,k}$.  However the entry $\rho_{3,4}$ shows that this is not always the case since $\gamma_{3,4} \ne 0$.  In  \cite{FHSW2} we  show that $\rho_{n,k}$  contains $\beta_{n,k}$ for all $n,k$, and in Section~\ref{conjecturesec} we speculate on possible necessary and sufficient conditions for  $\rho_{n,k} \cong \beta_{n,k}$.

This paper is organized as follows. 
 In Section~\ref{catalanrep} we prove our  
general result on eigenspaces of the  linear operator on $V_{n,2}$ mentioned above, which yields Theorem~\ref{catalanke}.
In Section~\ref{stair} we discuss   presentations of LAnKes and Specht modules that yield  an explicit isomorphism for Theorem~\ref{catalanke}.  We also use Theorem~\ref{catalanke} to obtain a new presentation of any Specht module of staircase shape.   In Section~\ref{conjecturesec} we discuss further research.

An extended abstract of this work appeared in the proceedings of FPSAC 2018 \cite{FHSW1}.

\section{The CataLAnKe representation and a linear operator} \label{catalanrep}
In this section, we prove Theorem~\ref{catalanke} by placing it   in a more general context as described in the introduction.  We define a linear operator whose kernel is isomorphic to the  representation $\rho_{n,2}$. We show that all the eigenspaces of this operator are irreducible and in particular that the eigenspace corresponding to $0$ is the Specht module $S^{2^{n-1}1}$.

\subsection{Preliminaries} \label{prelimsec}

The following generalizes the standard definition of a free Lie algebra.

\begin{definition} Given a set $X$, a   {\bf free LAnKe on $X$ } is a LAnKe $\L$ together with a mapping $i:X\rightarrow \L$ with the following universal property: for each LAnKe $\cal K$ and each mapping $f:X\rightarrow \cal K$, there is a unique LAnKe homomorphism $F:\L\rightarrow \cal K$ such that $f=F\circ i$.
\end{definition}

From this definition, one can see that the  free LAnKe on $[m]:= \{1,2,\dots,m\}$  is the vector space generated by the elements of $[m]$ and all possible $n$-bracketings involving these 
elements, subject only to the $n$-linearity of the bracket, antisymmetry, and generalized Jacobi relations given in  Definition~\ref{lankedef}.  Let 
$\lie_n(m)$ denote the multilinear component of the free LAnKe on $[m]$, that is, the subspace generated by 
$n$-bracketed words on $[m]$ that contain
each letter of $[m]$ exactly once.   We call these bracketed words, {\it bracketed permutations}.   

For example, the bracketed permutations $[[1,2,3],4,5]$, $[[1,2,4],3,5]$, $[[1,2,5],3,4]$, $[[1,3,4],2,5]$, $[[1,3,5],2,4]$, $[[1,4,5],2,3]$, $[[2,3,4],1,5]$, $[[2,3,5],1,4]$, $[[2,4,5],1,3]$,
$[[3,4,5],1,2]$ span the vector space $\lie_3(5)$.  By the generalized Jacobi relations and the antisymmetry relations, we have
\begin{eqnarray*}[[1,2,3],4,5] &=& [[1,4,5],2,3] + [1,[2,4,5],3] + [1,2,[3,4,5]] \\
&=& [[1,4,5],2,3]  - [[2,4,5],1,3] + [[3,4,5],1,2] . \end{eqnarray*}

 A permutation $\sigma$ in the symmetric group $S_m$ acts naturally on a bracketed permutation  in $\lie_n(m)$ by
replacing each letter $x$ of a bracketed permutation with $\sigma(x)$.  For example, if $\sigma \in S_5$ then
$$\sigma [[2,3,5],1,4] = [[\sigma(2),\sigma(3),\sigma(5)],\sigma(1),\sigma(4)] .$$ Since the antisymmetry and generalized Jacobi relations  are preserved by this action, this induces a representation of  $S_m$ on the vector space $\lie_n(m)$.

Note that if $k$ is the number of brackets of a bracketed permutation in $\lie_n(m)$ then $m = (n-1)k +1$.  (We can also think of the bracketed permutations as rooted plane $n$-ary trees on leaf set $[(n-1)k +1]$; see Section~\ref{conjecturesec}.) 
 Hence $\lie_n((n-1)k+1)$ is  spanned by the bracketed permutations on $[(n-1)k+1]$ with exactly $k$ brackets.   
Let $\rho_{n,k}$ denote the representation of $S_{(n-1)k +1}$ on $\lie_n((n-1)k +1)$.   In this section,  we study $\rho_{n,2}$, the representation of $S_{2n-1}$ on $\lie_n(2n-1)$.

\subsection{A presentation for $\rho_{n,2}$}  
Let $V_{n,2}$ be the multilinear component of the  vector space generated by all possible  $n$-bracketed permutations on $[2n-1]$, subject only to  antisymmetry of the brackets  given in (\ref{antsym}) (but not to generalized Jacobi, (\ref{type1})).  That is, $V_{n,2}$ is the subspace generated by 
$$u_\tau:= [ [\tau_1, \ldots,  \tau_{n}],  \tau_{n+1}, \ldots ,\tau_{2n-1}],$$
where $\tau \in S_{2n-1}$, $\tau_i=\tau(i)$ for each $i$, and $[ \cdot, \dots, \cdot ]$ is the antisymmetric $n$-linear bracket (that does not satisfy the generalized Jacobi relation).

The symmetric group $S_{2n-1}$ acts on generators of $V_{n,2}$ by the following action: for $\sigma,\tau  \in S_{2n-1}$
$$\sigma u_\tau = u_{\sigma\tau}.$$
This induces a representation of $S_{2n-1}$ on $V_{n,2}$ since the action respects the antisymmetry relation.

For each $n$-element subset $S:= \{a_1,\dots,a_n\}$ of $[2n-1]$, let 
$$v_S =  [ [a_1, \ldots , a_{n}], b_1, \ldots ,b_{n-1} ],$$
 where  $ \{b_1 , \cdots , b_{n-1} \}= [2n-1] \setminus S $, and the $a_i$'s and $b_i$'s are in increasing order.   Clearly, 
\begin{equation} \label{basiseq} \left \{v_S : S \in \binom{[2n-1]}{n}\right \}\end{equation} is a basis for $V_{n,2}$.  Thus $V_{n,2}$ has dimension $\binom{2n-1}{n}$.
 
 For each $S \in \binom{[2n-1]}{n}$, use the generalized Jacobi Identity (\ref{type1}), to define   the  relation
\be \label{jacobirel} R_S :=  v_S \, -\sum_{i=1}^n [a_1, \ldots ,a_{i-1}, [a_i, b_1, \ldots , b_{n-1} ], a_{i+1}, \ldots , a_{n}], \hskip 3cm\ee
where $a_1 <\dots < a_n $ and $b_1 < \dots <b_{n-1}$ are as in the previous paragraph.
Let $R_{n,2}$ be the subspace of $V_{n,2}$ generated by the $R_S$.    Then as $S_{2n-1}$-modules   

\begin{equation} \label{Req} V_{n,2}/R_{n,2} \cong \rho_{n,2}.\end{equation}

\subsection{The linear operator $\varphi$.} Now consider the linear operator $\varphi: V_{n,2} \to V_{n,2}$ defined on  basis elements by
$$\varphi(v_S) = R_S.$$  It is not difficult to see that $\varphi$ is an $S_{2n-1}$-module homomorphism whose image is $R_{n,2}$.  
We will need the following lemmas.

 \begin{lemma} \label{decomplem}

\begin{enumerate}[(a)]
\item As  $S_{2n-1}$--modules, 

\[V_{n,2} \cong \bigoplus _{i=0}^{n-1} S^{2^i1^{2n-1-2i}}.\]
\item The operator $\varphi$ acts as a scalar on each irreducible submodule.
\item As  $S_{2n-1}$--modules, \begin{equation} \label{kereq} \nullsp \varphi \cong V_{n,2} / R_{n,2}. \end{equation}
\end{enumerate}
\end{lemma}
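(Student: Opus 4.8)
The plan is to prove (a) by recognizing $V_{n,3}$ as an induced (signed permutation) module and then applying the Pieri rule, after which (b) and (c) will follow formally. For (a), I would first use the fact, already recorded above, that $\{v_S : S \in \binom{[2n-1]}{n}\}$ is a basis, together with the observation that $S_{2n-1}$ transitively permutes the lines $\C v_S$: applying $\sigma$ to $v_S$ relabels the entries of the inner bracket and of the trailing letters while keeping the block in first position, and reordering each group back into increasing order via the antisymmetry relation (\ref{antsym}) gives $\sigma v_S = \pm v_{\sigma S}$. Fixing $S_0 = \{1,\dots,n\}$, the set stabilizer of $S_0$ is the Young subgroup $S_n \times S_{n-1}$ (permuting $\{1,\dots,n\}$ and $\{n+1,\dots,2n-1\}$ separately), and for such $\sigma$ the two reordering signs multiply to $\sgn(\sigma)$. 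Hence $\C v_{S_0} \cong S^{1^n}\otimes S^{1^{n-1}}$ as an $(S_n\times S_{n-1})$-module, and therefore
\[ V_{n,3} \cong \ind_{S_n \times S_{n-1}}^{S_{2n-1}}\left(S^{1^n}\otimes S^{1^{n-1}}\right). \]

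To finish (a), I would pass to Frobenius characteristics, where the right-hand side becomes the product $s_{1^n}\cdot s_{1^{n-1}} = e_n\, e_{n-1}$. The Pieri rule for multiplication by $e_{n-1}$ expands this as $\sum_\lambda s_\lambda$, summed over partitions $\lambda$ of $2n-1$ obtained from the column $(1^n)$ by adjoining a vertical strip of $n-1$ boxes. A short enumeration shows that these $\lambda$ are exactly $(2^i1^{2n-1-2i})$ for $0\le i\le n-1$: the $i$ boxes placed in the second column must occupy the top $i$ rows to preserve partition shape, and the remaining $n-1-i$ boxes extend the first column downward. This yields the claimed decomposition $V_{n,3}\cong\bigoplus_{i=0}^{n-1}S^{2^i1^{2n-1-2i}}$, and in particular the summands are pairwise non-isomorphic, so $V_{n,3}$ is multiplicity-free.

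Parts (b) and (c) are then formal. For (b): $\varphi$ is an $S_{2n-1}$-module endomorphism by the preceding lemma, so by Schur's lemma all of its components between distinct irreducible summands vanish; since the decomposition in (a) is multiplicity-free, $\varphi$ preserves each summand and acts there as a scalar. For (c): the defining relation $\varphi(v_S)=R_S$ shows $R_{n,3}=\operatorname{span}\{R_S\}=\operatorname{im}\varphi$. By (b) the map $\varphi$ is diagonalizable with the irreducible summands as eigenspaces, so $\ker\varphi$ is the direct sum of the summands on which $\varphi$ acts by $0$, while $\operatorname{im}\varphi=R_{n,3}$ is the sum of the remaining ones; consequently $V_{n,3}/R_{n,3}=V_{n,3}/\operatorname{im}\varphi\cong\ker\varphi$, which is (\ref{kereq}). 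The only real work lies in part (a), and specifically in pinning down the sign character so as to get the induced-module description exactly right; once that identification is in hand, the Pieri expansion and the Schur-lemma arguments for (b) and (c) are routine.
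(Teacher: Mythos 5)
Your proposal is correct and follows essentially the same route as the paper: identifying $V_{n,3}$ with the induction of $\sgn_n\times\sgn_{n-1}$ from $S_n\times S_{n-1}$, decomposing via the Pieri/Young rule (twisted by sign) into the vertical-strip shapes $2^i1^{2n-1-2i}$, and then deducing (b) from Schur's lemma and (c) from (b). You simply spell out the sign bookkeeping, the vertical-strip enumeration, and the identification $R_{n,3}=\operatorname{im}\varphi$ in more detail than the paper does.
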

\begin{proof} Observe that, due to the antisymmetry of the bracket, the space $V_{n,2}$ constitutes the representation of $S_{2n-1}$ induced from the  sign representation of the Young subgroup $S_n \times S_{n-1}$:

\[ V_{n,2}\cong  (\sgn_n \times \sgn_{n-1} ) \uparrow _{S_n \times S_{n-1}}^{S_{2n-1}}.  \]
Part (a) then follows from Young's rule twisted by the sign representation.  Since Part (a) indicates that $V_{n,2}$ is multiplicity-free, Part (b)  follows from  Schur's lemma. Part (c) follows from Part (b).
\end{proof}

We leave the straightforward proof of the following lemma to the reader. \begin{lemma} \label{coefth} For all $v \in V_{n,2}$, let $\langle v,v_S \rangle$ denote the coefficient of $v_S$ in the expansion of $v$ in the basis given in (\ref{basiseq}).  Then for all $S,T \in \binom{[2n-1]}{n}$,
$$\langle \varphi(v_S),v_T \rangle = \left\{
  \begin{array}{l l}

1& \quad  \text{if } S=T  \\
\\

    (-1)^{d} &  \quad  \text{if } S\cap T = \{d\}\\
\\ 
    0 & \quad \text{if } S\neq T \text{ but } |S\cap T| >1 . \
  \end{array} \right.$$
\end{lemma}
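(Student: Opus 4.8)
The plan is to expand $\varphi(v_S)=R_S$ directly in the basis $\{v_T\}$ of (\ref{basiseq}) and read off each coefficient. The defining expression (\ref{jacobirel}) already displays one term equal to $v_S$, so the case $S=T$ will follow at once, provided none of the remaining summands contribute a further copy of $v_S$. The real work is to rewrite each summand
$$[a_1,\ldots,a_{i-1},[a_i,b_1,\ldots,b_{n-1}],a_{i+1},\ldots,a_n]$$
as a signed basis element, using only the antisymmetry relation (\ref{antsym}).

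First I would identify the subset attached to the $i$-th summand. Its inner bracket involves $\{a_i\}\cup([2n-1]\setminus S)$, an $n$-element set I will call $S_i$, whose complement is $S\setminus\{a_i\}$; hence $|S_i\cap S|=1$ and $S_i\neq S$. I would put the $i$-th summand into standard form $v_{S_i}$ in two moves: sort the inner bracket $[a_i,b_1,\ldots,b_{n-1}]$ into increasing order, which costs a sign $(-1)^{m_i}$ with $m_i:=|\{j:b_j<a_i\}|$ (the $b_j$ are already increasing, so $a_i$ accounts for all inversions); and move this bracket from the $i$-th slot of the outer bracket to the first slot, an $i$-cycle of sign $(-1)^{i-1}$. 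Thus the $i$-th summand equals $(-1)^{i-1+m_i}v_{S_i}$, and
$$R_S=v_S-\sum_{i=1}^n(-1)^{i-1+m_i}\,v_{S_i}.$$

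The step I expect to be the crux is the arithmetic that collapses the sign $(-1)^{i-1+m_i}$ to a quantity depending only on the shared element. Since $a_i$ is the $i$-th smallest element of $S$, exactly $i-1$ elements of $S$ and exactly $m_i$ elements of $[2n-1]\setminus S$ lie below it, so the number of integers in $[2n-1]$ smaller than $a_i$ is $(i-1)+m_i=a_i-1$. Hence the coefficient of $v_{S_i}$ in $R_S$ is $-(-1)^{a_i-1}=(-1)^{a_i}$.

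Finally I would match an arbitrary $T$ against the list $\{S,S_1,\ldots,S_n\}$. The decisive observation is that $[2n-1]\setminus S$ has exactly $n-1$ elements, so any $T$ with $T\cap S=\{d\}$ must contain all of $[2n-1]\setminus S$ together with $d$; that is, $T=\{d\}\cup([2n-1]\setminus S)=S_i$ for the unique $i$ with $a_i=d$, giving $\langle\varphi(v_S),v_T\rangle=(-1)^{a_i}=(-1)^d$. When $T=S$ the coefficient is $1$, since no $S_i$ equals $S$; and when $T\neq S$ with $|T\cap S|>1$, the set $T$ is neither $S$ nor any $S_i$ (each meeting $S$ in a single point), so its coefficient is $0$. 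This exhausts the three cases.
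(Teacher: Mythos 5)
Your proof is correct: the key identity $(i-1)+m_i=a_i-1$, which converts the sign $(-1)^{i-1+m_i}$ accumulated from reordering the outer and inner brackets into $(-1)^{a_i-1}$, is exactly right, and the observation that any $T$ with $T\cap S=\{d\}$ must equal $\{d\}\cup([2n-1]\setminus S)$ correctly accounts for all three cases. The paper states this lemma as an observation without proof, and your direct expansion of $R_S$ in the basis $\{v_S\}$ supplies precisely the computation that was omitted.
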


\subsection{The eigenvalues and eigenspaces of $\varphi$} It follows from (\ref{Req}) and (\ref{kereq}) that
\begin{equation} \label{nulleq} \nullsp \varphi \cong \rho_{n,2}.\end{equation} 
Hence   Theorem~\ref{catalanke} says that the kernel of $\varphi$ is isomorphic to the Specht module $S^{2^{n-1}1}$. The next result generalizes this  to all the eigenspaces of $\varphi$. 

\begin{theorem} \label{eigenth}
The operator $\varphi$ has $n$ distinct eigenvalues given by
\be w _i := 1+(n-i)(-1)^{n-i},\ee
for $i=0,1, \ldots , n-1$. Moreover, if $E_i$ is the eigenspace corresponding to $w _i$ then as $S_{2n-1}$-modules,
$$E_i\cong S^{2^i1^{(2n-1)-2i}}$$
for each $i=0,1, \ldots , n-1$. 
\end{theorem}

\begin{proof} By Lemma~\ref{decomplem}, $\varphi$ acts as a scalar on each irreducible submodule. 
 To compute the scalar, we start by letting $t$ be the standard Young tableau of shape $2^i1^{2n-1-2i}$ given by
$$t\, = \, 
\ytableausetup
{mathmode, boxsize=2em}
\begin{ytableau}
\scriptstyle1 &\scriptstyle n+1  \\
\scriptstyle 2 & \scriptstyle n+2  \\
\vdots & \vdots \\
\scriptstyle i &\scriptstyle n+i\\
\scriptstyle i + 1  \\
\vdots \\
\scriptstyle n \\
\scriptstyle n+i+1\\
\vdots \\
\scriptstyle 2n-1
\end{ytableau}
$$

\vskip .4cm
\no Let $C_t$ be the column stabilizer of $t$ and let $R_t$ be the row stabilizer.
Recall that the Young symmetrizer associated with $t$ is defined by $$e_t:=\sum_{\alpha \in R_{t}}  \alpha  \\  \sum_{\beta \in C_t} \sgn(\beta) \beta $$ 
and that the Specht module $S^{2^i1^{2n-1-2i} }$ is the submodule of the regular representation  $\C S_{2n-1}$ spanned by $\{\tau e_t : \tau \in S_{2n-1} \} $.  

 Now set $T:=[n]$, $r_t:= \sum_{\alpha \in R_t} \alpha$ and factor $$\sum_{\beta \in C_t} \sgn(\beta) \beta = f_t d_t,$$ where $d_t$ is the signed sum of permutations in $C_t$ that stabilize $\{1, 2, \ldots , n\}$, $\{ n+1, \ldots , n+i\}$, $\{ n+i+1, \ldots , 2n-1\}$ and $f_t$ is the signed sum of permutations in $C_t$ that maintain the vertical order of these sets. 
So $e_t v_T =  r_t f_t d_t v_T$. Because of the antisymmetry of the bracket, we have

\begin{align} \non d_t v_T &=n!\, ((n+i)-(n+1)+1)!\, ((2n-1)-(n+i+1)+1)!\, \,\, v_T
\\ \non &=n!\,i!(n-i-1)! \,\,\, v_T \, .
\end{align}
Hence $r_t f_t v_T$  is a  scalar multiple of $e_t v_T$ .  Since   the coefficient of $v_T$ in the expansion of $r_t f_t v_T$ is  $1 $,  we have $e_t v_T \ne 0$.

Let $\psi:\C S_{2n-1} \to V_{n,2}$ be the $S_{2n-1}$-module
homomorphism defined  by $\psi(\sigma) = \sigma v_T$, where $\sigma
\in S_{2n-1}$ and $T:=[n]$.   Now consider the  restriction of $\psi$
to the Specht module  $S^{2^i1^{2n-1-2i} }$.    By the irreducibility
of  the Specht module and  the fact that $e_t v_T \ne 0$, this
restriction  is  an  isomorphism from  $S^{2^i1^{2n-1-2i} }$ to the
subspace of $V_{n,2}$ spanned by $\{\tau e_t v_T : \tau \in S_{2n-1}
\} $.  This subspace is therefore the unique subspace of $V_{n,2}$
isomorphic to $S^{2^i1^{(2n-1)-2i}}$.  From here on, we will abuse
notation by letting $S^{2^i1^{(2n-1)-2i}}$ denote the subspace of
$V_{n,2}$ spanned by $\{\tau e_t v_T : \tau \in S_{2n-1} \} $.

 Since $r_t f_t v_T$ is a scalar multiple of $e_tv_T$, it is in $S^ {2^i1^{(2n-1)-2i}}$.  It follows that 
$$\varphi(r_t f_t v_T ) = c r_t f_t v_T ,$$ for some scalar $c$, which we want to show equals $w_i$.
Using the fact that the coefficient of $v_T$ in $r_t f_t v_T$ is 1, we conclude that
$c$  is the coefficient of $v_T$ in $\varphi(r_t f_t v_T)$.  Hence to complete the proof we need only show that
\be \label{aieq} \langle \varphi(r_t f_t v_T), v_T \rangle = w_i := 1+ (n-i) (-1)^{n-i}.\ee

Consider the expansion, 
$$r_t f_t v_T = \sum_{S \in \binom{[2n-1]}{n}} \langle r_t f_t v_T , v_S\rangle v_S,$$
which by linearity yields,
$$\varphi(r_t f_t v_T )= \sum_{S \in \binom{[2n-1]}{n}} \langle r_t f_t v_T , v_S\rangle \varphi(v_S).$$
Hence the coefficient of $v_T$ is given by
$$\langle \varphi(r_t f_t v_T), v_T \rangle = \sum_{S \in \binom{[2n-1]}{n}} \langle r_t f_t v_T , v_S\rangle \langle\varphi(v_S),v_T\rangle.$$
Looking back at Lemma~\ref{coefth}, we see that the   $S=T$ term is $1$, which yields,
$$\langle \varphi(r_t f_t v_T), v_T \rangle = 1+  \sum_{S \in \binom{[2n-1]}{n} \setminus \{T\}} \langle r_t f_t v_T , v_S\rangle \langle\varphi(v_S),v_T\rangle.$$
To get a contribution from an $S \ne T$ term, by Lemma~\ref{coefth}, we must have $S \cap T = \{d\}$ for some $d$, in which case $ \langle\varphi(v_S),v_T\rangle = (-1)^d$.  Hence 
\be \label{dsumeq} \langle \varphi(r_t f_t v_T), v_T \rangle = 1+ \sum_{d=1}^n (-1)^d  \langle r_t f_t v_T , v_{S(d)}\rangle,\ee
where $$S(d) = \{d,n+1,n+2,\dots,2n-1\}.$$

To compute $ \langle r_t f_t v_T , v_{S(d)}\rangle$, we must consider how we get $v_{S(d)}$ from the action of permutations appearing  in $r_tf_t$ on $v_T$.
Recall that $f_t$ is a sum of column permutations $\sigma$ of $t$ (with sign) that maintain the vertical order of $ \{1, 2, \ldots , n\} $, $ \{n+1, \ldots , n+i\} $, and $ \{n+i+1, \ldots , 2n-1\}$. 
In order to get $S(d)$, we have that $\sigma$ fixes $ 1, 2, \ldots , i $ and $n+1, \ldots , n+i$ and then the row permutation $\alpha$ is 
$$\alpha = (1, n+1)(2, n+2)\cdots (i, n+i) (i+1)\cdots (2n-1)$$

\no \underline{and} $\sigma$ interchanges $\{n+i+1, \ldots , 2n-1\}$ with a subset of $\{ i+1, \ldots ,n\}$, leaving one element $d$ of $\{ i+1, \ldots ,n\}$ in rows $i+1, \ldots , n$. 

Since $\sigma$ maintains the vertical order of $1,2,\ldots , n$, it must be that 
$d=i+1$. Thus the summation in (\ref{dsumeq}) is left only with the $d=i+1$ term.
Suppose that $i+1$ goes to row $j$ with $i+1\leq j\leq n$. So

\footnotesize

$$ t\, = \, 
\ytableausetup
{mathmode, boxsize=2em}
\begin{ytableau}
\scriptstyle1 &\scriptstyle n+1  \\
\scriptstyle 2 & \scriptstyle n+2  \\
\vdots & \vdots \\
\scriptstyle i &\scriptstyle n+i\\
\scriptstyle i + 1  \\
\vdots \\
\scriptstyle j-1 \\
\scriptstyle j\\
\scriptstyle j+1\\ 
\vdots \\
\scriptstyle n \\
\scriptstyle n+i+1\\
\vdots \\
\scriptstyle 2n-1
\end{ytableau}
\hskip 1cm \longrightarrow \hskip 1cm
\alpha \sigma t  \, = \, 
\ytableausetup
{mathmode, boxsize=2em}
\begin{ytableau}
\scriptstyle n+1 &\scriptstyle 1  \\
\scriptstyle n+2   & \scriptstyle 2\\
\vdots & \vdots \\
\scriptstyle n+i &\scriptstyle i\\
\scriptstyle n+i+1\\
 \vdots \\
\scriptstyle n+j-1 \\
\scriptstyle i+1 \\
\scriptstyle n+j \\
\vdots\\
\scriptstyle 2n-1\\
\scriptstyle i + 2  \\
\vdots \\
\scriptstyle n
\end{ytableau}
$$
\normalsize
One can easily compute the sign of $\sigma$ by counting inversions or writing $\sigma$ in cycle form as
\beas \sigma &=& (1)\cdots (i)(n+1)\cdots (n+i)(i+1, n+i+1, i+2, n+i+2, \ldots , j)\\
&&(n+j, j+1)(n+j+1, j+2)\cdots (2n-1, n),
\eeas
where the cycle involving $i+1$ is a $((2n-1-2i)-2(n-j))$--cycle. 
So 
\[ \text{sgn}(\sigma)=(-1)^{(2j-2i-1)+1}(-1)^{n-j}=(-1)^{n-j} .\]

In terms of our basis, 
\begin{align*}   \alpha \sigma v_T   =   [ [&{n+1}, n+2, \ldots , {n+i}, {n+i+1}, \ldots, {n+j-1}, {i+1}, {n+j}, \ldots  , {2n-1}], \\ &1, 2, \ldots , i, {i+2}, \ldots , n ].
\end{align*}
To put this basis in canonical form, we need to move ${i+1}$ to the front of the inside bracket, which
yields $ \alpha \sigma v_T = (-1)^{j-1} v_{S(i+1)}$.  
Hence $\sgn(\sigma) \alpha \sigma v_T = (-1)^{n-1} v_{S(i+1)}$.
Since there are $n-i$ positions $j$ where $i+1$ might land, 
$$\langle r_t f_t v_T, v_{S(i+1)} \rangle = (n-i) (-1)^{n-1}.$$
Since  all other terms in the summation in (\ref{dsumeq}) vanish, by
plugging this into (\ref{dsumeq}), we obtain 
(\ref{aieq}), which completes the proof.
\end{proof}

\begin{proof}[Proof of Theorem~\ref{catalanke}]    By Theorem~\ref{eigenth},  since $w _i=0$ for $i=n-1$ only,  $S^{2^{n-1}1}$ is the kernel of $\varphi$.    The result now follows from (\ref{nulleq}).  
\end{proof}

\section{Alternative presentations}\label{stair}
In this section, we discuss  presentations of LAnKes and Specht modules that yield  an explicit isomorphism from $\rho_{n,2}$ to $S^{2^{n-1}1}$.  We also obtain a new presentation for Specht modules of staircase shape.

For each partition $\lambda= (\lambda_1 \geq\dots \geq \lambda_l)$ of $m$,
let $\mathcal T_\lambda$ be the set of Young tableaux of shape
$\lambda$ in which each element of $[m]$ appears once.
Let $M^\lambda$ be the vector space generated by  
$\mathcal T_\lambda$ subject only to column relations, which are of
the form $t+s$, where $s$ is obtained from $t$ by switching two
entries in the same column.  
Given $t \in \mathcal T_\lambda$, let $\bar t$ denote the coset containing $t$
in $M^\lambda$. These cosets, which are called {\it column tabloids}, generate $M^{\lambda}$.
 The symmetric group $S_m$ acts on $\mathcal T_\lambda$ by replacing
 each entry of a tableau by its image under the permutation in $S_m$.
 This induces a representation of $S_m$ on $M^\lambda$.

There are various different presentations of $S^\lambda$ in the literature, which involve the column relations and Garnir relations.  Here we are interested in a presentation of $S^\lambda$ discussed in Fulton \cite[Section 7.4]{Fu}.
The Garnir relations are of the form
$\bar t-\sum \bar s$, where the sum is over all $s\in \mathcal
T_\lambda$ obtained from $t\in \mathcal T_\lambda$ by exchanging any
$k$ entries of a fixed column   with the top $k$ entries of the next
column, while maintaining the vertical order of each of the exchanged
sets.  There is a Garnir relation $g^t_{c,k}$ for every $t \in
\mathcal T_\lambda$, every column $c \in [\lambda_1-1]$,  and  every
$k$ from $1$ to the length of the column $c+1$.  Let $G^\lambda$ be
the subspace of $M^\lambda$ generated by these Garnir
relations. Clearly $G^\lambda$ is invariant under the action of
$S_m$. The  presentation of $S^\lambda$ obtained in Section 7.4 of \cite{Fu} is given by
   \be \label{Fultoneq} M^\lambda/G^\lambda \cong S^\lambda. \ee
   
 On page 102 (after Ex. 15) of \cite{Fu}, a presentation of $S^\lambda$ with fewer relations is given.   The presentation is
\be \label{1fulton} M^{\lambda} / G^{\lambda,1} \cong S^{\lambda} ,\ee where
 $G^{\lambda,1}$ is the subspace of $G^\lambda$ generated by $$\{ g^t_{c,1} : c \in [\lambda_1-1], t \in \mathcal T_\lambda \}  .$$ 
 
 In Appendix 1 of \cite{DI}, a proof that the   generalized Jacobi relations (\ref{type1}) are equivalent to the relations  
\begin{align} \label{filipov} & [[x_1, x_2, \ldots,  x_n], y_{1},\ldots , y_{n-1}] \\
\nonumber = &\sum_{i=1}^{n} [[x_1, x_{2}, \ldots , x_{i-1},y_{1}, x_{i+1}, \ldots , x_{n}],x_{i},y_{2},\ldots , y_{n-1}]
\end{align}
 is given.
 This gives an alternative presentation of $\rho_{n,k}$ for all $n,k$. 

Using the natural correspondence between generators $$[[a_1,\dots,a_n],b_1,\dots,b_{n-1}]$$  of $V_{n,2}$ and column tabloids $\bar t$,  where $t$ is the tableau whose first column is $a_1,\dots,a_n$ and whose second column is $b_1,\dots, b_{n-1}$, we see that the alternative Jacobi relations (\ref{filipov}) correspond to the Garnir relation $g^t_{1,1}$ for $\lambda = 2^{n-1}1$.  Thus  the natural correspondence between generators yields an isomorphism from  $\rho_{n,2}$ to the realization of $S^{2^{n-1}1}$ given in (\ref{1fulton}).

As we have just noted, the presentation (\ref{1fulton}) and the equivalence of the  generalized Jacobi relations (\ref{type1}) and (\ref{filipov})  imply Theorem~\ref{catalanke}.  It is not difficult to see that  conversely Theorem~\ref{catalanke} and the presentation (\ref{1fulton})  imply  the equivalence of the generalized Jacobi relations (\ref{type1}) and (\ref{filipov}) (not just in the free case).  Thus the proof of Theorem~\ref{catalanke} given in Section~\ref{catalanrep} yields a new proof of this equivalence.

The natural  correspondence between generators of $V_{n,2}$ and generators of $M^{2^{n-1}1}$ also takes the generalized Jacobi relations (\ref{type1}) to the Garnir relations $g^t_{1,n-1}$.  This enables us to give another presentation of $S^{2^{n-1}1}$ with fewer relations than that of (\ref{Fultoneq}).  In fact, we can extend this to a wider class of Specht modules.   Suppose the length  of column $c$ of the Young diagram $\lambda$   is $n$ and the length of column $c+1$ is $n-1$.   One of the Garnir relations for  $t \in \mathcal T_\lambda$  is  $g^t_{c,n-1}$, which  is  $ \bar t- \sum \bar s$, where the sum is over all $s$ obtained from $t$ by exchanging the entire column $c+1$ with all but one element of column $c$.  There will be one $s$ for each entry of column $c$ that remains behind in the exchange.  
  
  Suppose column $c$ of $t$ has entries
  $a_1,a_2,\dots,a_n$ reading from top down and column $c+1$ has entries $b_1,\dots, b_{n-1}$, also reading from top down.  
   We can associate $\bar t$ with the bracketed permutation,
  $$[[a_1,a_2,\dots,a_n],b_1,\dots, b_{n-1}],$$ where the bracket is  antisymmetric.  The Garnir relation $g^t_{c,n-1}$ corresponds to the relation
 \begin{align*} [[a_1,& a_2,\dots,a_n],b_1,\dots, b_{n-1}]- \\ & \sum_{i=1}^{n} [[b_1,\dots, b_{i-1}, a_{i}, b_{i}, \dots, b_{n-1}], a_1,\dots,\hat a_{i}, \dots, a_n],\end{align*}
  where $\hat \cdot$ denotes deletion.
  If we move the $a_{i}$ to the front of the inner bracket and move the inner bracket to the place where the $a_{i}$ was deleted, the signs will cancel each other,  and we will get the generalized Jacobi relation  (\ref{type1}).
  It therefore follows from Theorem~\ref{catalanke} that $\{ g^t_{c,n-1}: t \in \mathcal T_\lambda\}$ generates all the other Garnir relations in $\{g^t_{c,k}: t \in \mathcal T_\lambda, k \in [n-1]\}$ for fixed column $c$.  This allows us to reduce the number of relations in the presentation of $S^\lambda$ given in (\ref{Fultoneq}).  We express this in the following result.  
    
  \begin{theorem} \label{garth} Let $(\lambda^\prime_1 \ge \lambda^\prime_2 \ge \dots  \ge \lambda^\prime_j) $ be the conjugate of $
  \lambda\vdash m$.  Let $\tilde G^\lambda$ be the subspace of 
  $M^\lambda$ generated by the union of the sets $\{ g^t_{c,\lambda^\prime_{c
  +1}}: t \in \mathcal T_\lambda\}$  for each column $c$ for which $\lambda^\prime_{c+1} = \lambda^\prime_{c}-1$  and the sets 
  $ \{g^t_{c,k} : t \in \mathcal T_\lambda, k \in [\lambda^\prime_{c+1}] \} $ for the other columns.  Then 
    $$S^\lambda  \cong M^\lambda / \tilde G^\lambda.$$
  \end{theorem}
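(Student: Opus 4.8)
The plan is to derive Theorem~\ref{garth} from Fulton's presentation (\ref{Fultoneq}) and the two-column case, which is exactly Theorem~\ref{catalanke}, by localizing at one pair of adjacent columns at a time. Since every generator of $\tilde G^\lambda$ is one of Fulton's Garnir relations $g^t_{c,k}$, we have $\tilde G^\lambda\subseteq G^\lambda$ and hence a surjection $M^\lambda/\tilde G^\lambda\twoheadrightarrow M^\lambda/G^\lambda\cong S^\lambda$. It therefore suffices to prove the reverse inclusion $G^\lambda\subseteq\tilde G^\lambda$. The only generators of $G^\lambda$ not already among the generators of $\tilde G^\lambda$ are the relations $g^t_{c,k}$ with $c$ a column satisfying $\lambda'_{c+1}=\lambda'_c-1$ and $1\le k\le\lambda'_{c+1}-1$, so the whole problem reduces to showing that each of these omitted relations lies in $\tilde G^\lambda$.

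Fix such a column $c$, write $n=\lambda'_c$ so that $\lambda'_{c+1}=n-1$, and consider the two-column shape $\mu=2^{n-1}1$. Identifying $M^\mu$ with $V_{n,3}$ via Lemma~\ref{decomplem}(a), the top Garnir relation $g^s_{1,n-1}$ is carried, after the sign-preserving rewriting described in the discussion preceding the theorem, to the generalized Jacobi relation $R_S$; thus $\langle g^s_{1,n-1}:s\rangle$ corresponds to $R_{n,3}$. Theorem~\ref{catalanke} gives $V_{n,3}/R_{n,3}\cong S^{2^{n-1}1}=S^\mu$, while (\ref{Fultoneq}) gives $M^\mu/G^\mu\cong S^\mu$. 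Since $R_{n,3}\subseteq G^\mu$ and the two quotients have equal dimension, I conclude $R_{n,3}=G^\mu$: in the shape $\mu$, the top relations $\{g^s_{1,n-1}\}$ already generate every $g^s_{1,k}$ with $1\le k\le n-1$.

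Now I would transplant this into $\lambda$. A Garnir relation $g^t_{c,k}$ exchanges entries only between columns $c$ and $c+1$ and leaves every other cell fixed, so freezing all entries outside these two columns identifies the span of the fillings that agree with $t$ off columns $c,c+1$ with a copy of $M^\mu$ on the $2n-1$ symbols occupying them, equivariantly for the subgroup of $S_m$ permuting those symbols, and this identification sends $g^u_{c,k}$ to the two-column relation $g_{1,k}$. By the previous paragraph, $g^t_{c,k}$ is a linear combination of top relations $g^u_{c,n-1}$, the $u$ ranging over fillings agreeing with $t$ off columns $c,c+1$. Because $k=n-1=\lambda'_{c+1}$ exhausts column $c+1$, the relation $g^u_{c,n-1}$ depends only on the sets of entries in columns $c$ and $c+1$ (and on the frozen columns), not on their internal order; hence, up to sign and the column relations of $M^\lambda$, each $g^u_{c,n-1}$ equals a generator $g^s_{c,n-1}$ with $s$ standard and so lies in $\tilde G^\lambda$. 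This yields $g^t_{c,k}\in\tilde G^\lambda$ and completes the reverse inclusion, giving $S^\lambda\cong M^\lambda/\tilde G^\lambda$.

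The main obstacle is the localization step: verifying that freezing the remaining columns yields an honest, $S_{2n-1}$-equivariant copy of $M^\mu$ inside $M^\lambda$ carrying Garnir relations to Garnir relations, so that Theorem~\ref{catalanke} may be applied verbatim. The retained relations cause little trouble here, since a top Garnir relation ignores the internal order of the shorter column and therefore matches a $\tilde G^\lambda$-generator up to sign; the care is entirely in checking that the spectator columns contribute only an overall sign and that the two-column rewriting of $g^s_{1,n-1}$ into $R_S$ is faithfully reproduced for every frozen configuration. This is exactly the content that the phrase ``It therefore follows'' in the discussion preceding the theorem leaves implicit.
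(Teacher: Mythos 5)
Your proposal is correct and takes essentially the same route as the paper: the paper's justification (the discussion preceding the theorem) likewise identifies the top Garnir relation $g^t_{c,n-1}$ for an adjacent column pair of lengths $n$ and $n-1$ with the generalized Jacobi relation, and then combines Theorem~\ref{catalanke} with Fulton's presentation (\ref{Fultoneq}) to conclude that these top relations already generate all Garnir relations for that column. Your explicit localization/freezing argument simply fills in what the paper leaves implicit in the phrase ``It therefore follows.''
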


  We will say that  $\lambda $ is a {\it staircase partition} if
its conjugate  has the form
$(n,n-1,n-2,\dots,n-r)$.   Note that the partition $2^{n-1} 1$ is a staircase partition.  The following result   reduces to Theorem~\ref{catalanke} for the shape $2^{n-1}1$. 

\begin{corollary}  \label{staircase} Let $\lambda$ be a staircase partition of $m$ and let $\tilde G^\lambda$ be the subspace  of $M^\lambda$  generated by $$\{ g^t_{c,\lambda^\prime_{c+1}} : c \in [\lambda_1-1], t \in \mathcal T^*_\lambda \}  ,$$ where $\mathcal T^*_\lambda$ is the set of  Young tableaux of shape $\lambda$ in which each element of $[m]$ appears once and the  columns increase.  Then
$$S^\lambda  \cong M^\lambda / \tilde G^\lambda.$$
\end{corollary}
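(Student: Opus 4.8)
The plan is to read the Corollary off Theorem~\ref{garth} after two observations: that a staircase shape makes \emph{every} column of the reduced type, and that enlarging the index set of tableaux from standard to column-increasing does not change the subspace of relations. First I would record the defining arithmetic of a staircase: with $\lambda'=(n,n-1,\dots,n-r)$ one has $\lambda_1=r+1$ and $\lambda'_c=n-c+1$, so $\lambda'_{c+1}=\lambda'_c-1$ holds for every $c\in[\lambda_1-1]=[r]$. Hence in Theorem~\ref{garth} the ``other columns'' alternative is vacuous, and the presenting subspace produced there is
\[
\tilde G^\lambda_{\mathrm{std}}\;:=\;\mathrm{span}\bigl\{\,g^t_{c,\lambda'_{c+1}}:c\in[\lambda_1-1],\ t\in\mathcal T_\lambda\,\bigr\},
\]
one family of Garnir relations per column, indexed by \emph{standard} tableaux. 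By~(\ref{Fultoneq}) the natural $S_m$-equivariant surjection $\pi\colon M^\lambda\twoheadrightarrow S^\lambda$ has kernel the full Garnir space $G^\lambda$; since $\tilde G^\lambda_{\mathrm{std}}\subseteq G^\lambda$ and, by Theorem~\ref{garth}, already satisfies $M^\lambda/\tilde G^\lambda_{\mathrm{std}}\cong S^\lambda$, a dimension count forces $\tilde G^\lambda_{\mathrm{std}}=G^\lambda=\ker\pi$. The Corollary asserts the same presentation with $\mathcal T_\lambda$ replaced by the larger set $\mathcal T^*_\lambda\supseteq\mathcal T_\lambda$ of column-increasing tableaux; call the resulting subspace $\tilde G^\lambda$.

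It then suffices to prove $\tilde G^\lambda=\tilde G^\lambda_{\mathrm{std}}=\ker\pi$, which I would do by a squeeze. Since $\mathcal T_\lambda\subseteq\mathcal T^*_\lambda$, every generator of $\tilde G^\lambda_{\mathrm{std}}$ is a generator of $\tilde G^\lambda$, giving $\tilde G^\lambda_{\mathrm{std}}\subseteq\tilde G^\lambda$. For the reverse inclusion I would show that \emph{every} generator $g^t_{c,\lambda'_{c+1}}$ of $\tilde G^\lambda$, with $t\in\mathcal T^*_\lambda$, already lies in $\ker\pi$. The mechanism is equivariance: applying $\sigma\in S_m$ merely relabels entries without disturbing the positional column-exchange pattern that defines a Garnir element, so $\sigma\cdot g^t_{c,k}=g^{\sigma t}_{c,k}$. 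Because $S_m$ acts transitively on bijective fillings of $\lambda$ (any two differ by the permutation matching their entries cell by cell), I can write a given $t\in\mathcal T^*_\lambda$ as $t=\sigma t_0$ with $t_0$ standard, and then
\[
\pi\bigl(g^t_{c,\lambda'_{c+1}}\bigr)=\pi\bigl(\sigma\,g^{t_0}_{c,\lambda'_{c+1}}\bigr)=\sigma\,\pi\bigl(g^{t_0}_{c,\lambda'_{c+1}}\bigr)=0,
\]
the last equality because $g^{t_0}_{c,\lambda'_{c+1}}\in\tilde G^\lambda_{\mathrm{std}}=\ker\pi$. Thus $\tilde G^\lambda\subseteq\ker\pi=\tilde G^\lambda_{\mathrm{std}}$, and combined with the first inclusion the three subspaces coincide, whence $M^\lambda/\tilde G^\lambda\cong S^\lambda$.

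The genuinely load-bearing input is Theorem~\ref{garth} itself, which in turn rests on Theorem~\ref{catalanke} applied column by column; granting that, the Corollary is a specialization together with a bookkeeping step. The one point I would take care to justify cleanly, rather than assert, is the naturality identity $\sigma\cdot g^t_{c,k}=g^{\sigma t}_{c,k}$ inside $M^\lambda$: I would verify that relabeling commutes with the column relations built into $M^\lambda$, so that the Garnir element depends on $t$ only through its cell-to-entry assignment and not on any standardness hypothesis. This is the main (and only mild) obstacle, since it is what legitimizes passing from the standard indexing of Theorem~\ref{garth} to the column-increasing indexing of the Corollary. With it in hand the transitivity argument closes the gap, and no further appeal to the CataLAnKe theorem is needed beyond its use inside Theorem~\ref{garth}.
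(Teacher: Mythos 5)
The first half of your argument is correct and is the intended reduction: for a staircase $\lambda'=(n,n-1,\dots,n-r)$ one has $\lambda'_{c+1}=\lambda'_c-1$ for every $c\in[\lambda_1-1]=[r]$, so the ``other columns'' clause of Theorem~\ref{garth} is vacuous and that theorem already presents $S^\lambda$ by the relations $g^t_{c,\lambda'_{c+1}}$ alone. The gap is in the passage from $\mathcal T_\lambda$ to $\mathcal T^*_\lambda$, where you have the containment backwards. Despite the phrase ``standard Young tableaux,'' $\mathcal T_\lambda$ has to be the set of \emph{all} bijective fillings of $\lambda$: the column relation $t+s$, the $S_m$-action on $\mathcal T_\lambda$, and the requirement that the Garnir sum run over $s\in\mathcal T_\lambda$ all fail to preserve row-and-column-increasing tableaux. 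Hence $\mathcal T^*_\lambda$ (column-increasing fillings) is a \emph{proper subset} of $\mathcal T_\lambda$, and the Corollary shrinks the generating set rather than enlarging it; under your reading it would merely adjoin redundant generators to Theorem~\ref{garth}, which cannot be its point.

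Consequently your squeeze establishes only the easy inclusion $\tilde G^\lambda\subseteq\tilde G^\lambda_{\mathrm{std}}=\ker\pi$, while the direction you dismiss as the trivial one, namely $\ker\pi\subseteq\tilde G^\lambda$, is the actual content. What is needed is the statement that for every bijective filling $t$, with column-increasing rearrangement $t^*$, one has $g^t_{c,\lambda'_{c+1}}=\pm\, g^{t^*}_{c,\lambda'_{c+1}}$ in $M^\lambda$, so that the generators indexed by $\mathcal T^*_\lambda$ already span all of $\tilde G^\lambda_{\mathrm{std}}$. This is true and not deep: under the paper's dictionary the full-column Garnir element for columns $c,c+1$ is the generalized Jacobi relation (\ref{type1}) for the tuple read off those two columns, and the antisymmetry (\ref{antsym}) of the bracket shows that sorting each column multiplies the entire relation by a single sign. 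But it must be stated and verified, and your equivariance identity $\sigma g^{t_0}_{c,k}=g^{\sigma t_0}_{c,k}$ does not substitute for it, since $\sigma t_0$ need not be column-increasing and $\tilde G^\lambda$ is not even visibly $S_m$-stable until this very lemma is in place.
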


In \cite{FHSW3} the authors show that Corollary~\ref{staircase} holds for a broader class of partitions, namely the partitions whose conjugate has distinct parts.  The proof is based on a generalization of Theorem~\ref{eigenth}.   In \cite{BF},  Brauner and Friedmann obtain a result analogous to this generalization of Theorem~\ref{eigenth}  and use it to obtain an interesting new presentation of Specht modules of {\it all} shapes, in which the  number of relations has  been similarly reduced. This new presentation implies the presentation (\ref{1fulton}) and is used to give another proof of Theorem~\ref{catalanke}.

\section{Further results and speculations} \label{conjecturesec} 

For $n \ge 2$ and $k \ge 1$, let $\beta_{n,k}$ be the $S_{(n-1)k+1}$-module whose decomposition into irreducibles is obtained by adding a row of length $k$ to the top of each Young diagram in the decomposition of $\rho_{n-1,k}$. (Set $\rho_{1,k} := S^1$.)  For example, using the decomposition of $\rho_{2,3}$ given in Table 1, we have $\beta_{3,3} = S^{321^2} \oplus S^{3^21}$, and using the  decompostion of $\rho_{2,4}$ given in Table 1, we have  
\begin{equation} \label{b34} \beta_{3,4}= S^{432}\oplus S^{4^21}\oplus S^{421^3} \oplus S^{431^2} \oplus S^{4 2^2 1}.\end{equation}

In \cite{FHSW2} we prove that   
\begin{equation} \label{gambet} \rho_{n,k} \cong \gamma_{n,k} \oplus \beta_{n,k}, \end{equation}
for some  $S_{(n-1)k+1}$-module $\gamma_{n,k}$ whose irreducibles  have at most $k-1$ columns.    From Table 1 we  see that
for $1 \le k \le 3$, $\gamma_{n,k} =0$ if and only if $n \ge k$.

\begin{question} \label{conconj1} For general $k \ge 1$, does $n \ge k$ imply $\gamma_{n,k} =0$?
\end{question}

We think that the converse is likely to be true.
\begin{conjecture}\label{conj1} If $n < k$ then $\gamma_{n,k} \ne 0$.
\end{conjecture}

We can see from Table 1 that this conjecture is true whenever $1\le k \le 4$.  It is easy to see that the conjecture is also true when $n=2$. Indeed, $\beta_{2,k} = S^{k,1}$ has dimension $k$ and $\dim \rho_{2,k} = k!$.

We give some further justification for Conjecture~\ref{conj1} by considering a  submodule of $\rho_{n,k}$ spanned  by a certain set of $n$-bracketed permutations.  
It is convenient to think of $n$-bracketed permutations on a finite set $X$ as rooted plane $n$-ary trees on leaf set $X$.  If $T$ is such a tree and $X=\{a\}$, let $[T]$  be the bracketed permutation $a$.  If $|X| > 1$, let $[T]$ be the bracketed permutation  defined recursively by $[[T_1],[T_2],\dots,[T_n]]$, where $T_1,T_2,\dots, T_n$ are the subtrees of the root of $T$ ordered from left to right.  Note that the number of internal nodes of $T$ is equal to the number of brackets of $[T]$.

For example, if $T$ is the ternary tree in which the children of the root are the leaves 1,2,3 ordered from left to right then $[T] = [1,2,3]$.   If $T$ is the ternary tree with subtrees from left to right given by $T_1, T_2,T_3$, where $[T_1] = [1,2,3]$, $[T_2] = [4,5,6]$,
and $[T_3] = [[7,8,9],10,11]$ then $[T] = [[1,2,3],[4,5,6],[[7,8,9],10,11]]$.

We will say that an internal node  of an $n$-ary tree $T $ is {\it abundant} if all of its children are internal nodes.  In the second example given above, the root of $T$ is the only abundant internal node of $T$.   We will say that $T$ is {\it abundant} if it has an abundant internal node.  Thus the $T$ in the first example given above is not abundant, while the $T$ in the second example is.  

Let $\mathcal T_{n,k}$ be the set of  rooted plane $n$-ary trees on leaf set $[k(n-1) +1]$ and let $\alpha_{n,k}$ be the submodule of $\rho_{n,k}$ spanned by $$\{[T] : T \in \mathcal T_{n,k} \mbox{ and  }T \mbox{ is abundant} \}.$$   It follows from Young's rule that all the irreducibles in $\alpha_{n,k}$ have at most $k-1$ columns.  Hence $\alpha_{n,k}$ is isomorphic to a submodule of $\gamma_{n,k}$.  Thus the following conjecture implies Conjecture~\ref{conj1}.

\begin{conjecture} \label{alphaconj1} If $n < k $ then $\alpha_{n,k} \ne 0$.
\end{conjecture}

Note that the only way that this conjecture could be false is if $[T] = 0$ for all abundant $T \in \mathcal T_{n,k}$.  In particular, if it is false 
for $k=n+1$  then the single term relation $$[[x_1,\dots,x_n],[x_{n+1},\dots,x_{2n}], \dots, [x_{(n-1)n +1},\dots, x_{n^2}]]=0 $$ would have to hold for all LAnKe's. This seems unlikely.

The next two propositions respectively show that the converse of Conjecture~\ref{alphaconj1} is true and that the $n=2$ case of the conjecture is true.
\begin{proposition} \label{ampleprop} Let $n \ge 2$ and $k \ge 1$. Then $\mathcal T_{n,k}$ contains an abundant tree if and only if $n < k $.  Consequently, $\alpha_{n,k} = 0$ if $n \ge k$.
\end{proposition}

\begin{proof} Suppose $T \in \mathcal T_{n,k}$ is abundant.  Let $y_1,\dots,y_j$ be the nonabundant internal nodes of $T$ and for each $i$, let $l_i$ be the number of children of   $y_i$ that are leaves.
Clearly $\sum_{i=1}^j l_i = (n-1)k+1$.  We also have $\sum_{i=1}^j l_i  \le jn \le (k-1)n$ since $T$ is abundant.  Hence $(n-1)k+1 \le (k-1)n$, which is equivalent to $n < k $.

Now suppose $n < k $.  We will  construct an  abundant tree in $\mathcal T_{n,k}$.  First let $S$ be any tree in $\mathcal T_{n,k-n-1}$.  So $S$ has leaf set $\{1,2,\dots, m\}$, where $m=(n-1)(k-n-1)+1$.  Replace the leaf $m$ in $S$ with the tree $U$, where $[U]:=$
$$[[m,\dots,m+n-1],[m+n,\dots, m+2n-1],\dots,[m+(n-1)n,\dots, m+n^2-1]]$$ to get the abundant tree $T$.  Since we added $n+1$ internal nodes (or brackets),  $T$ has $k-n-1 + n+1 = k$ internal nodes. Hence $T$ is the desired abundant tree in  $\mathcal T_{n,k}$.
\end{proof}

\begin{proposition} \label{row2prop} For all $k \ge 1$, $\alpha_{2,k} \cong \gamma_{2,k}$.  Consequently, Conjecture~\ref{alphaconj1} is true for $n=2$.
\end{proposition}

\begin{proof}  We need only show that  $$\rho_{2,k} / \alpha_{2,k} \cong S^{k1}$$   since $\beta_{2,k} = S^{k1}$.   This is clearly true when $k< 3$; so assume $k \ge 3$.   It follows from Theorem~\ref{liek} (b) that $S^{k1}$ has multiplicity $1$ in $\rho_{2,k}$.  Since all irreducibles in $\alpha_{2,k}$ have fewer than $k$ columns, $S^{k1}$ is not  in  $\alpha_{2,k}$. Thus $S^{k1}$ is in the quotient  $\rho_{2,k} / \alpha_{2,k}$ with multiplicity 1.  We will show that there are no other irreducibles in the quotient.  

Define the  comb
 $$c_m:=[\dots [[[1,2],3],4],\dots, m] .$$  It is well known that $\{\sigma c_{k+1}: \sigma \in S_{k+1}, \sigma(1) = 1\}$ forms a basis for $\rho_{2,k}$, see e.g. \cite{Wa}.  This is known as the {\it comb basis}.
 
 Let $2< j \le k$. Let $w= [c_{j-1},[j,j+1]]$, $u= [[c_{j-1},j],j+1]$ and $v= [[c_{j-1},j+1],j]$.  By the Jacobi relations, $w=u-v$. It follows that $$[\dots [[w,j+2],j+3],\dots,k+1] $$ $$= [\dots [[u,j+2],j+3],\dots,k+1] - [\dots [[v,j+2],j+3],\dots,k+1] .$$

Since $w$ represents an abundant tree, so does $[\dots [[w,j+2],j+3],\dots,k+1] $.  Hence  $[\dots [[w,j+2],j+3],\dots,k+1]  =0$ in the quotient $\rho_{2,k}/ \alpha_{2,k}$.  This implies that in the quotient $$ [\dots [[u,j+2],j+3],\dots,k+1] = [\dots [[v,j+2],j+3],\dots,k+1]  .$$
But note that $[\dots [[u,j+2],j+3],\dots,k+1] $ is the comb  $c_{k+1}$ and $[\dots [[v,j+2],j+3],\dots,k+1] $ is the comb $(j,j+1)c_{k+1}$.
Hence in the quotient $c_{k+1} = (j,j+1) c_{k+1}$  for  $2<j \le k$.  It follows that $\sigma c_{k+1} = c_{k+1}$ for all $\sigma \in S_{k+1}$ such that $\sigma(1) = 1$ and $\sigma(2) =2$. This implies that there is at most one comb in the quotient whose leftmost leaves are $1,2$.  

The same argument shows that for each $a=2,3,\dots,k+1$, there is at most one comb in the quotient whose leftmost leaves are $1,a$. 
We are thus left with  at most $k$ combs whose leftmost leaf is $1$.  Since the combs whose leftmost leaf is $1$ form a basis for $\rho_{2,k} $, they span $\rho_{2,k}/ \alpha_{2,k}$.  Hence $\rho_{2,k}/ \alpha_{2,k}$ has dimension at most $k$.  But since $S^{k1}$ has dimension $k$ and is contained in the quotient, the quotient must be isomorphic to $S^{k1}$.   
\end{proof}

\begin{question} \label{qestalpgam} Is $\alpha_{n,k} $ isomorphic to $\gamma_{n,k}$ for all $n \ge 2$ and $k \ge 1$?
\end{question}
From Table 1 and Proposition~\ref{ampleprop}, we see that  Question~\ref{qestalpgam} has an affirmative answer whenever $k \le 3$.

\begin{proposition} \label{diag3} If  Question~\ref{qestalpgam} has an affirmative answer for $(n,k) = (3,4)$ then
\begin{equation} \label{diag3eq} \rho_{3,4} \cong  S^{3^2 1^3} \oplus S^{32^3} \oplus \beta_{3,4}.\end{equation}
 If Question~\ref{qestalpgam} has an affirmative answer when $k=4$ then for all $n \ge 3$, $$\rho_{n,4} \cong$$
$$S^{4^{n-3}3^2 1^3} \oplus S^{4^{n-3} 32^3} \oplus S^{4^{n-2}32}\oplus S^{4^{n-1}1}\oplus S^{4^{n-2}21^3} \oplus S^{4^{n-2}31^2} \oplus S^{4^{n-2} 2^2 1} .$$

\end{proposition}

\begin{proof}  From (\ref{gambet}) we have
$$ \rho_{3,4} \cong  \gamma_{3,4} \oplus \beta_{3,4}.$$
 Using a computer program written in C++, we found that $\dim \rho_{3,4}=1077$.   It follows from (\ref{b34}) and the hook length formula that  $\dim \beta_{3,4}= 873$.
Hence
 $\dim \gamma_{3,4} = 204$.  Since we are assuming $\alpha_{3,4} = \gamma_{3,4}$,  we have  $\dim \alpha_{3,4} = 204$.
 
Every abundant tree in $\mathcal T_{3,4}$,  has the form $$[[\sigma(1),\sigma(2), \sigma(3)],[\sigma(4),\sigma(5), \sigma(6)],
[\sigma(7),\sigma(8), \sigma(9)]], $$ where $\sigma \in S_9$.  It follows that $\alpha_{3,4}$ is a submodule of the induction to $S_{9}$ of the wreath product module $\sgn_{3}[\sgn_{3}]$, which  decomposes as\footnote{Sage was used to obtain this.} 
$$S^{1^9} \oplus S^{2^21^5} \oplus S^{2^31^3} \oplus S^{32^3} \oplus S^{3^21^3}.$$  By the  hook length formula, the  respective dimensions of these Specht modules are $1,27,48,84$ and $120$.  Hence $\dim \alpha_{3,4} $ is the sum of some subset of these numbers.   This subset must be $\{84,120\}$ since this is the only subset whose sum is equal to  $\dim \alpha_{3,4}$. It follows that $\alpha_{3,4}= S^{32^3} \oplus S^{3^21^3}$.  Since we are assuming that $\alpha_{3,4} = \gamma_{3,4}$, equation (\ref{diag3eq}) holds.

For $n \ge 4$, the decomposition now follows from (\ref{gambet}), (\ref{diag3eq}), (\ref{b34}), Proposition~\ref{ampleprop}, and the assumption that $\gamma_{n,4} =\alpha_{n,4}$.
\end{proof}

\vskip .5cm
\no \large {\bf Acknowledgements}
\normalsize
\\
We thank Vic Reiner, Jonathan Pakianathan, Thomas McElmurry, Craig Helfgott, Sarah Brauner, Leslie Nordstrom, Marissa Miller, and Harry Stern for helpful discussions.  We also thank the anonymous referee for helpful comments.


\begin{thebibliography}{99999999}

%\cite{Bagger:2008se}
\bibitem[BL]{BL}
  J.~Bagger and N.~Lambert,
  {\it Three-Algebras and N=6 Chern-Simons Gauge Theories},
  Phys.\ Rev.\  D {\bf 79} (2009), 025002, 8 pp.,
  [arXiv:0807.0163 [hep-th]]. 
  %%CITATION = PHRVA,D79,025002;%%

 
 \bibitem[Ba]{Ba}
H.~Barcelo,
{\it On the action of the symmetric group on the free {L}ie algebra and
  the partition lattice},
J. Combin. Theory Ser. A, {\bf 55}(1) (1990), 93--129.
 
\bibitem[BF]{BF} 
S. ~Brauner and T. ~Friedmann, {\it One Garnir to rule them all: on Specht modules and the CataLAnKe theorem}, arXiv: 1803.1091. 

\bibitem[Co]{Co} F.~Cohen, 
{\it On configuration spaces, their homology, and Lie algebras}, J. Pure and Applied Algebra {\bf 100} (1995) 19--42. 

\bibitem[DI]{DI} J. A. de Azcarraga and J. M. Izquierdo, {\it n-ary algebras: a review with applications}, J. Phys. A {\bf 43} 2010, p. 293001. 

\bibitem[DT]{DT} Yu.L.~Daletskii and L.A.~ Takhtajan,
{\it Leibniz and Lie algebra structures for Nambu algebra},  
Lett. Math. Phys. {\bf 39} (1997),  127--141.

 
  \bibitem[Fi]{Fi}
  V.T.~Filippov, 
  {\it n-Lie algebras}, 
  Sibirsk.\ Math.\ Zh. {\bf 26} (1985), 126--140. 


\bibitem[Fr]{Fr} T.~Friedmann, {\it Orbifold Singularities, Lie Algebras of the Third Kind (LATKes), and Pure Yang-Mills with Matter}, J. Math. Phys. {\bf 52} (2011), 022--304,  [arXiv:0806.0024 [math.AG]].

\bibitem[FHSW1]{FHSW1} T. Friedmann, P. Hanlon, R.P. Stanley, M.L. Wachs, {\it Action of the symmetric group on the free LAnKe: a CataLAnKe Theorem}, Seminaire Lotharingien de Combinatoire {\bf 80B} (2018), Article \#63. 

\bibitem[FHSW2]{FHSW2} T. Friedmann, P. Hanlon, R.P. Stanley, M.L. Wachs, {\it On an $n$-ary generalization of the Lie representation}, in preparation. 

\bibitem[FHSW3]{FHSW3} T. Friedmann, P. Hanlon, R.P. Stanley, M.L. Wachs, {\it A new presentation for Specht modules with distinct parts}, in preparation. 

\bibitem[Fu]{Fu} W. Fulton, {\it Young Tableaux}, London Mathematical Society, Student Texts {\bf 35}, Cambridge University Press, Cambridge, 1996.


 \bibitem[Gu]{Gu} A.~Gustavsson,
  {\it Algebraic structures on parallel M2-branes},
  Nucl.\ Phys.\  B {\bf 811} (2009), 66--76,
  [arXiv:0709.1260 [hep-th]].
 
\bibitem[Ha]{Han} P. Hanlon, {\it The fixed-point partition lattices}, Pacific J. Math. {\bf 96} (1981), 319--341. 

\bibitem[HW]{HW} P. Hanlon and M.L. Wachs, {\it On Lie $k$-algebras}, Adv.  Math. {\bf 113}  (1995), 206--236. 

 \bibitem[Ka]{Ka}   S.~M.~ Kasymov,
  {\it On a theory of n-Lie algebras},
  Algebra\ i\ Logika\ {\bf 26} (1987),  277--297.
  

\bibitem[Kl]{K} A.A. Klyachko,  {\it Lie elements in the tensor algebra}, Sbirsk. Math. Zh. (translation) {\bf 15} (1974), 1296--1304 .

\bibitem[Kr]{Kr} W. Kraskiewicz, {\it Reduced decompositions in Weyl groups}, Europ. J. Combinatorics {\bf 16} (1995), 293-313.

\bibitem[KW]{KW} W. Kraskiewicz and J. Weyman,  {\it Algebra of coinvariants and the action of a Coxeter element}, Bayreuth. Math. Schr. {\bf 63} (2001), 265--284.


\bibitem[KS]{KS} B. Kol and R. Shir, {\it Color Structures and Permutations}, J. High Energ. Phys. (2014) 2014: 20.

  \bibitem[Li]{Li}
  W.~X.~ Ling, 
  {\it On the structure of n-Lie algebras},
  PhD Thesis, Siegen, 1993.



\bibitem[Re]{Re} C. Reutenauer,  {\it Free Lie Algebras}, London Mathematical Society Monographs, New Series {\bf 7},  Clarendon Press, Oxford University Press, New York, 1993.






\bibitem[St1]{St1} R.P. Stanley, {\it Some aspects of groups acting on finite posets},  J. Combin.Theory, Ser. A {\bf 32} (1982), 132--161.

\bibitem[St2]{St2} R.P. Stanley,  Enumerative combinatorics, Vol. 2,
Cambridge Studies in Advanced Mathematics, {\bf 62},
Cambridge University Press, Cambridge, 1999.


  \bibitem[Ta]{Ta}
L.A.~Takhtajan,
{\it On foundation of the generalized Nambu mechanics},
Comm. Math. Phys. {\bf 160} (1994), 295--315.


\bibitem[Wa]{Wa}
M.L. Wachs,
{\it On the (co)homology of the partition lattice and the free Lie
  algebra},
Discrete Math. {\bf 93} (1998), 287--319.


%\bibitem[Wh]{Wh} S. Whitehouse, {\it The Eulerian representations of $\Sigma _n$ as restrictions of representations of $\Sigma_{n+1}$},  J. Pure Appl. Algebra {\bf 115} (1997), 309--320. 

\end{thebibliography}
\end{document}